\renewcommand{\maketitle}{\bgroup\setlength{\parindent}{0pt}
\begin{flushleft}
  \textbf{\@title}

  \@author
\end{flushleft}\egroup
} 
\def\footnoterule{\kern-3\p@
  \hrule \@width 2in \kern 2.6\p@} 
\title{{\large \textbf{Product-free sets in approximate subgroups of distal groups}}}
\author{Atticus Stonestrom\ \ \ \ \ \ \ \ \ \ \ \ \ \ \ \ \ \  {\footnotesize (Email: \texttt{atticusstonestrom@yahoo.com})}}
\date{}
\begin{document}
\maketitle

{\small \noindent\textbf{Abstract:} Recall that a subset $X$ of a group $G$ is `product-free' if $X^2\cap X=\varnothing$, ie if $xy\notin X$ for all $x,y\in X$. Let $G$ be a group definable in a distal structure. We prove there are constants $c>0$ and $\delta\in(0,1)$ such that every finite subset $X\subseteq G$ distinct from $\{1\}$ contains a product-free subset of size at least $\delta|X|^{c+1}/|X^2|^c$. In particular, every finite $k$-approximate subgroup of $G$ distinct from $\{1\}$ contains a product-free subset of density at least $\delta/k^c$.

The proof is short, and follows quickly from Ruzsa calculus and an iterated application of Chernikov and Starchenko's distal regularity lemma.\newline

\noindent\textbf{Acknowledgements:} I would like to thank Artem Chernikov, Ben Green, Zach Hunter, and Sergei Starchenko for helpful discussion, and I would especially like to thank Jonathan Pila and Udi Hrushovski for much encouragement and feedback. \newline 

\noindent\textbf{Notation:} Throughout we will use the standard terminology and notation of `Ruzsa calculus'; thus a `multiplicative set' is simply a finite subset of a group. Given multiplicative sets $X,Y$ in the same ambient group, we have $XY=\{xy:x\in X,y\in Y\}$ and $X^{-1}=\{x^{-1}:x\in X\}$ and, for any natural number $n$, $X^n=\{x_1\cdots x_n:x_i\in X\}$. Also, we take $\log$ to mean the logarithm base $2$.}

\section{Introduction}
Product-free sets – especially in the abelian context, where they are called `sum-free' – are a rich topic of study in additive combinatorics; one of the earliest results is Erd\H{o}s' 1965 proof that every finite set of non-zero integers contains a sum-free subset of density at least $1/3$. Given a multiplicative set $X$, two natural questions of interest include the following: what is the largest density of a product-free subset of $X$, and how many product-free subsets does $X$ contain? There is an extensive literature on these and related topics, and we refer the reader to the survey \citep{tao_vu_2017} for a picture of some of this landscape in the abelian case. The starting point of our motivation here is a question raised by Babai and Sós in \citep{babai_sos_1985}: is there a constant $\delta>0$ such that every finite non-trivial group contains a product-free subset of density at least $\delta$? 

In \citep{gowers_2008}, Gowers gave a negative answer to this question. Indeed, let $H$ be a non-trivial finite group and $d$ the minimal dimension of a non-trivial complex representation of $H$. Gowers then shows that $H$ does not contain any product-free set of size greater than $|H|/d^{1/3}$; by a classical result of Frobenius, this is enough to conclude that, for every prime power $q$, $\mathrm{PSL}_2(q)$ contains no product-free set of size greater than $2|\mathrm{PSL}_2(q)|^{8/9}$.

Gowers also obtained a bound in the reverse direction, showing that $H$ \textit{does} contain a product-free set of size at least $|H|/2000^d$. In \citep{nikolov_pyber_2011}, Nikolov and Pyber improved the exponential dependence on $d$ to a polynomial one, showing the existence of an absolute constant $\delta\in(0,1)$ such that $H$ contains a product-free set of size at least $\delta|H|/d$.

These results (and others – see eg \citep{kedlaya_2009} for a survey, and see \citep{palacin} for pseudo-finite analogues due to Palacín) give perspective on the maximal densities of product-free subsets of finite groups. In this paper we are motivated by the natural generalization of this line of inquiry to the class of finite \textit{approximate} groups. We show that, for fixed $k$, a positive answer to the analogue of Babai and Sós' question for $k$-approximate groups can be obtained in a certain model-theoretic context: namely, in the context of a group interpretable in a \textit{distal structure}. Specifically, given such a group $G$, we show the existence of constants $c>0$ and $\delta\in(0,1)$ such that every finite subset $X\subseteq G$ distinct from $\{1\}$ contains a product-free subset of density at least $\delta/k^c$, where $k=|X^2|/|X|$. In the case $G=\mathrm{GL}_n(\mathbb{C})$, we note that, modulo a weighted version of an argument of Alon and Kleitman, this also follows from Breuillard, Green, and Tao's classification of the approximate subgroups of $\mathrm{GL}_n(\mathbb{C})$ (again with polynomial dependence on $k$).

\section{Preliminaries}
\subsection{Approximate groups}Recall that the `doubling' and `tripling' constants of a multiplicative set $X$ are the ratios $|X^2|/|X|$ and $|X^3|/|X|$, respectively. For $k\geqslant 1$, $X$ is said to be a $k$-approximate group if $1\in X=X^{-1}$ and $X^2$ is covered by at most $k$-many translates of $X$; note that the doubling constant of a $k$-approximate group is at most $k$. There is a vast literature on $k$-approximate groups, and we refer the reader to the survey \citep{breuillard_2013} for context on the area. For our main result, we will need only two basic facts. The first is a consequence of Ruzsa's `triangle inequality':
\begin{fact}If $X$ is a multiplicative set with $|X^2|\leqslant k|X|$, then $|XX^{-1}|\leqslant k^2|X|$.
\end{fact}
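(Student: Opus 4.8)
The plan is to obtain this as an immediate corollary of the multiplicative form of Ruzsa's triangle inequality: for any multiplicative sets $A$, $B$, $C$ in a common ambient group, $|AC^{-1}|\cdot|B|\leqslant|AB^{-1}|\cdot|BC^{-1}|$. First I would recall the short proof of this inequality, in case one prefers not to invoke it as a black box. For each $d\in AC^{-1}$ fix a representation $d=a_dc_d^{-1}$ with $a_d\in A$ and $c_d\in C$; then the assignment $(d,b)\mapsto(a_db^{-1},\,bc_d^{-1})$ defines an injection from $AC^{-1}\times B$ into $AB^{-1}\times BC^{-1}$, since the product of the two output coordinates recovers $d$, hence recovers $a_d$ and $c_d$, and then the first output coordinate recovers $b$. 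Comparing cardinalities gives the stated inequality.

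Then I would specialize to $A=C=X$ and $B=X^{-1}$. Here $AC^{-1}=XX^{-1}$, while $AB^{-1}=X(X^{-1})^{-1}=X^2$ and $BC^{-1}=X^{-1}X^{-1}=(X^2)^{-1}$, so that $|AB^{-1}|=|BC^{-1}|=|X^2|$ and $|B|=|X^{-1}|=|X|$. Plugging these into the triangle inequality yields $|XX^{-1}|\leqslant|X^2|^2/|X|$, and the hypothesis $|X^2|\leqslant k|X|$ then gives $|XX^{-1}|\leqslant k^2|X|$, as required.

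I do not anticipate any genuine obstacle here; the argument is essentially forced once the middle set is chosen correctly. The only point that requires a moment's thought is the choice $B=X^{-1}$ rather than $B=X$: the latter choice would still exhibit $XX^{-1}$ as a product of the appropriate shape, but would bound it in terms of $|XX^{-1}|$ itself rather than purely in terms of $|X^2|$, whereas taking $B=X^{-1}$ turns both factors on the right-hand side into $|X^2|$.
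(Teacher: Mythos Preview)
Your proof is correct and matches the paper's approach exactly: the paper simply cites this fact as a consequence of Ruzsa's triangle inequality without giving details, and you have supplied precisely those details with the correct specialization $A=C=X$, $B=X^{-1}$.
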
The second is a consequence of a theorem of Petridis; see Theorem 1.5 in \citep{petridis_2012}:
\begin{fact}
If $X$ is a multiplicative set with $|X^2|\leqslant k|X|$, then there is a subset $Y\subseteq X$ such that $|Y|\geqslant |X|/k$ and $|Y^3|\leqslant k^3|Y|$.
\end{fact}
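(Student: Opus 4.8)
The plan is to deduce this from the engine behind Petridis's approach to the Pl\"{u}nnecke--Ruzsa inequalities, in its non-abelian form. The key ingredient is Petridis's \emph{efficient-subset lemma}: given multiplicative sets $A$ and $S$ in a group, set $K=\min\{|A'S|/|A'| : \varnothing\neq A'\subseteq A\}$ and let $Y\subseteq A$ be a nonempty subset attaining this minimum. Then $|TYS|\leqslant K\,|TY|$ for \emph{every} multiplicative set $T$. The minimality is precisely what drives the argument: every nonempty subset of $Y$ again lies in $A$, so it too has $S$-product-ratio at least $K$.

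Granting the lemma, I would argue as follows. Apply it with $A=S=X$ to obtain a nonempty $Y\subseteq X$ minimizing $|YX|/|Y|$, with minimum value $K$; evaluating the ratio at $A'=X$ shows $K\leqslant|X^2|/|X|\leqslant k$. The size bound is then immediate: for any $y\in Y$ we have $yX\subseteq YX$, so $|X|\leqslant|YX|=K|Y|\leqslant k|Y|$, i.e.\ $|Y|\geqslant|X|/k$. For the tripling bound I would iterate the lemma. Since $Y\subseteq X$ we have $Y^2\subseteq YX$, hence $|Y^2|\leqslant|YX|=K|Y|$; and $Y^3\subseteq Y^2X=Y\cdot(YX)$, so applying the conclusion of the lemma with $T=Y$ gives $|Y^3|\leqslant|Y^2X|\leqslant K|Y^2|$. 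Combining, $|Y^3|\leqslant K^2|Y|\leqslant k^2|Y|$, which in particular yields the stated bound $|Y^3|\leqslant k^3|Y|$. (Passing to a subset is genuinely necessary: taking $X=H\cup\{g\}$ where $H$ is a subgroup and $gHg^{-1}\cap H=\{1\}$, one has $|X^2|\leqslant 3|X|$ but $|X^3|\geqslant|HgH|=|H|^2$.)

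The real content — and the step I expect to be the main obstacle — is the efficient-subset lemma itself, where in the non-abelian setting the delicate point is keeping the sidedness straight in the induction on $|T|$. Writing $T=T'\sqcup\{t\}$, one puts $B_1=\{b\in Y:tb\in T'Y\}$; then $T'Y\cap tY=tB_1$, so $TY$ has exactly $|Y|-|B_1|$ more elements than $T'Y$, whereas $tB_1S\subseteq T'YS$, so $TYS$ has at most $|YS|-|B_1S|$ more elements than $T'YS$. Since $|YS|=K|Y|$ and $|B_1S|\geqslant K|B_1|$ by minimality, the new mass in $TYS$ is at most $K$ times the new mass in $TY$, and the induction closes. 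This is exactly Petridis's argument; see Theorem~1.5 of \citep{petridis_2012}.
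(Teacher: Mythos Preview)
Your argument is correct and follows exactly the route the paper indicates: the paper gives no proof of its own but simply cites Theorem~1.5 of \citep{petridis_2012}, and what you have written is precisely an unpacking of that theorem (the efficient-subset lemma) together with its standard derivation of the tripling bound. Your deduction in fact yields the sharper conclusion $|Y^3|\leqslant k^2|Y|$, which of course implies the stated $k^3$ bound.
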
 In some follow-up discussion, it will also be convenient to cite the following results; the first is a consequence of Theorem 4.6 in \citep{tao_2008}.

\begin{fact}
    Let $Y,Z$ be multiplicative sets of size $n$, and suppose $|YZ|\leqslant kn$. Then, in the ambient group, there is an $O(k^{O(1)})$-approximate group $W$ and an element $u$ such that $|W|\leqslant O(k^{O(1)})n$ and $|Y\cap uW|\geqslant |Y|/O(k^{O(1)})$.
\end{fact}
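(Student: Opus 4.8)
The plan is to show that $Y$ is covered by a bounded number of left-translates of a bounded-size approximate group, and then to extract $u$ by pigeonholing over those translates. The covering is provided by the non-commutative Freiman-type structure theorem for sets of small doubling, which is (a form of) Theorem 4.6 of \citep{tao_2008}; the only thing one must supply by hand is the hypothesis of that theorem, namely a small-doubling estimate for $Y$, and this is read off from $|YZ|\leqslant kn$ using nothing more than Ruzsa's triangle inequality.

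First, the Ruzsa step. Ruzsa's triangle inequality states that $|AC^{-1}|\,|B|\leqslant|AB^{-1}|\,|BC^{-1}|$ for finite sets $A,B,C$ in a group. Taking $A=C=Y$ and $B=Z^{-1}$, and using $|Z^{-1}Y^{-1}|=|(YZ)^{-1}|=|YZ|$, this gives
\[
|YY^{-1}|\;\leqslant\;\frac{|YZ|\cdot|Z^{-1}Y^{-1}|}{|Z|}\;=\;\frac{|YZ|^{2}}{n}\;\leqslant\;k^{2}n\;=\;k^{2}|Y|.
\]
(By the symmetric choice $A=C=Z$, $B=Y^{-1}$ one gets $|Z^{-1}Z|\leqslant k^{2}|Z|$ as well; equivalently, the symmetric set $Y\cup Z^{-1}$ has $|(Y\cup Z^{-1})(Y\cup Z^{-1})^{-1}|=O(k^{2})n$ by the same computation, and one can run the rest of the argument with it.) Thus $Y$ has doubling $O(k^{2})$ in the form $|YY^{-1}|=O(k^{2})|Y|$.

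Now apply Theorem 4.6 of \citep{tao_2008}: from $|YY^{-1}|\leqslant k^{2}|Y|$ one obtains an $O(k^{O(1)})$-approximate group $W$ with $|W|\leqslant O(k^{O(1)})n$, together with a set $T$ with $|T|\leqslant O(k^{O(1)})$, such that $Y\subseteq TW=\bigcup_{t\in T}tW$. Pigeonholing over the (at most $O(k^{O(1)})$-many) translates $tW$ covering $Y$, some translate $uW$ satisfies $|Y\cap uW|\geqslant|Y|/|T|\geqslant|Y|/O(k^{O(1)})$, which is exactly the claim.

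The one genuinely non-trivial point, and the reason Theorem 4.6 is doing real work, is the asymmetry of the hypothesis: $|YZ|\leqslant kn$ controls $|YY^{-1}|$, as above, but not $|Y^{2}|$. For instance, in the free group on $a,b$, the sets $Y=\{ba^{i}:1\leqslant i\leqslant n\}$ and $Z=\{a^{-j}:1\leqslant j\leqslant n\}$ satisfy $|YZ|=2n-1$ while $|Y^{2}|=n^{2}$. So one cannot simply quote a Freiman-type theorem phrased via small $|Y^{2}|$ or small tripling $|Y^{3}|$; in particular Petridis' theorem (Fact 2) does not apply directly to $Y$. The substance of Theorem 4.6 is that the one-sided bound $|YY^{-1}|=O(k^{O(1)})|Y|$ already forces the approximate-group structure, internally by passing to a large subset of $Y$ of bounded tripling and then running the usual covering arguments. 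Everything outside that is routine Ruzsa calculus and a pigeonhole.
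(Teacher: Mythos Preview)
The paper does not supply its own proof here; it merely records the Fact as ``a consequence of Theorem 4.6 in \citep{tao_2008}''. Your argument---Ruzsa's inequality to bound $|YY^{-1}|$, then Tao's structure theorem, then a pigeonhole over the covering translates---is the standard and correct way to unpack that citation, and you even isolate the one genuine subtlety (that the asymmetric hypothesis controls $|YY^{-1}|$ but not $|Y^{2}|$), so this is essentially what the author has in mind.
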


The second is a consequence of the celebrated Balog-Szemerédi-Gowers theorem; see for example Lemma 5.1 in \citep{tao_2008}.

\begin{fact}
    Let $X$ be a multiplicative set, and suppose there are at least $|X|^2/k$ many pairs $(x,y)\in X\times X$ satisfying $xy\in X$. Then there are subsets $Y,Z\subseteq X$,  each of size at least $|X|/O(k^{O(1)})$, such that $|YZ|\leqslant O(k^{O(1)})|X|$.
\end{fact}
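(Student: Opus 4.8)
The plan is to derive this as a black-box consequence of the (non-abelian) Balog--Szemer\'edi--Gowers theorem, so the only real work is to package the hypothesis into the form BSG expects. Write $n=|X|$ and let $G\subseteq X\times X$ be the ``popularity graph'' $G=\{(x,y):x,y\in X,\ xy\in X\}$; by assumption $|G|\geqslant n^2/k$, so $G$ is dense in $X\times X$. The point to notice is that the restricted product set $\{xy:(x,y)\in G\}$ is contained in $X$ by construction, hence has size at most $n$. This is exactly the input for the graph form of BSG --- see Lemma~5.1 of \citep{tao_2008}, or Tao's non-commutative BSG theorem: a dense bipartite subgraph all of whose edge-products lie in a set of size $O(n)$ produces subsets $Y,Z\subseteq X$ with $|Y|,|Z|\geqslant n/O(k^{O(1)})$ and $|YZ|\leqslant O(k^{O(1)})n=O(k^{O(1)})|X|$, which is the claim.

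An essentially equivalent route goes through multiplicative energy. Writing $r(g)=|\{(x,y)\in X\times X:xy=g\}|$ for $g$ in the ambient group, we have $\sum_g r(g)=n^2$, while the hypothesis is $\sum_{g\in X}r(g)\geqslant n^2/k$; Cauchy--Schwarz then gives $\sum_{g\in X}r(g)^2\geqslant (n^2/k)^2/n=n^3/k^2$, so the multiplicative energy $E(X)=\sum_g r(g)^2$ is at least $n^3/k^2$. Feeding this into the small-doubling form of BSG with parameter $K=k^2$ yields a single $X'\subseteq X$ with $|X'|\geqslant n/O(k^{O(1)})$ and $|X'X'|\leqslant O(k^{O(1)})n$, and one takes $Y=Z=X'$; any mild discrepancy between two-sided and one-sided product sets in the non-abelian setting is absorbed by routine Ruzsa calculus. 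Either way, one just tracks the BSG constants as polynomials in $k$ to land the stated $O(k^{O(1)})$ bounds.

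I do not anticipate any genuine obstacle: this lemma is entirely standard, and the substance of the paper is in the distal regularity argument rather than here. If one insisted on self-containedness, the ``hard part'' would be the proof of BSG itself --- the dependent-random-choice / alternating-path count showing that, after passing to a dense subgraph of $G$, most pairs in $X\times X$ can be realised by short paths and hence the product set does not blow up --- but we take that theorem as given.
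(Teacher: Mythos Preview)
Your proposal is correct and matches the paper's own treatment: the paper does not give a proof but simply states the fact as a consequence of the Balog--Szemer\'edi--Gowers theorem, citing Lemma~5.1 of \citep{tao_2008}. Your packaging of the hypothesis via the popularity graph (or, alternatively, via multiplicative energy and Cauchy--Schwarz) is the standard way to invoke that lemma, so you are exactly in line with what the paper intends.
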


\subsection{Distal structures}
The model-theoretic context for this paper is the notion of distality, a property of first-order structures introduced by Simon in \citep{simon_2013}. We will not need the definition of a distal structure here, and refer the reader to the introduction of \citep{chernikov_starchenko_2018} for an overview. Instead, we point out two motivating examples: (i) the field $(\mathbb{R},\cdot,+)$, or more generally any o-minimal structure, and (ii) the field $(\mathbb{Q}_p,\cdot,+)$ for any prime $p$.

In this note we are concerned with groups $(G,\cdot)$ that are definable in some distal structure. One non-trivial fact is that these are precisely the groups \textit{interpretable} in a distal structure; see \citep{krupinski_portillo} or \citep{aschenbrenner_chernikov_gehret_ziegler} for detailed proofs of this. (As an aside, note that it also follows directly from Fact 2.7 below alone that every relation interpretable in a distal structure has the strong Erd\H{o}s-Hajnal property.)

As a concrete example, the reader may throughout consider $G$ to be the $\mathbb{R}$-points of any semialgebraic group.

The main tool we employ comes from the paper \citep{chernikov_starchenko_2018}, wherein Chernikov and Starchenko showed that relations definable in distal structures enjoy strong combinatorial regularity properties. To state their theorem, we first recall the notion of a `homogeneous' tuple of sets in an $n$-partite hypergraph; let $R\subseteq X_1\times\dots\times X_n$ be an $n$-ary relation on sets $X_i$.

\begin{definition}A tuple $(U_1,\dots,U_n)$ of subsets $U_i\subseteq X_i$ is `$R$-homogeneous' if $U_1\times\dots\times U_n$ is either a subset of $R$ or disjoint from $R$.
\end{definition}
\begin{definition}
$R$ has the `strong Erd\H{o}s-Hajnal property' if there exists a constant $\delta\in(0,1)$ with the following property: for any finite subsets $W_i\subseteq X_i$, there are subsets $U_i\subseteq W_i$, with each $U_i$ of size at least $\delta |W_i|$, such that $(U_1,\dots,U_n)$ is $R$-homogeneous.
\end{definition}

The relevant result from \citep{chernikov_starchenko_2018} is then the following, which is an immediate consequence of Corollary 4.5 there:
\begin{fact}
Every relation definable in a distal structure has the strong Erd\H{o}s-Hajnal property.
\end{fact}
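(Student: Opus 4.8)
The plan is to deduce this from the existence of \emph{distal cell decompositions}, which is the main technical input of \citep{chernikov_starchenko_2018}. Recall what this provides. For any formula $\phi(x;y)$ definable in the ambient structure $\mathcal M$ (with $x,y$ possibly tuples of variables), there is a constant $t=t(\phi)$ such that for every finite set $B$ of parameters from the $y$-sort the $x$-sort admits a partition into at most $O(|B|^{t})$ definable ``cells'', each of which is $\phi$-homogeneous over $B$ — meaning that, writing $\phi(\mathcal M,b)$ for the set defined by $\phi(x,b)$, each cell is contained in, or disjoint from, $\phi(\mathcal M,b)$ for every $b\in B$ — and each of which is supported on at most $t$ of the parameters of $B$. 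Since $\phi$ has finite VC dimension, this last clause is exactly what is needed to run the classical cutting/$\epsilon$-net argument: applying the cell decomposition not to $B$ but to a random subset $B_0\subseteq B$ of size $O_\epsilon(1)$ produces, with positive probability, a partition of the $x$-sort into only $O_\epsilon(1)$ cells, each of which is ``crossed'' by at most $\epsilon|B|$ of the sets $\phi(\mathcal M,b)$, $b\in B$; that is, each cell is $\phi$-homogeneous over all but an $\epsilon$-fraction of $B$, and — the key point — the number of cells no longer depends on $|B|$.

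Granting this, the binary case $n=2$ is immediate. Let $R=\phi(x;y)\subseteq X\times Y$ be definable, fix $\epsilon=\tfrac14$, and let $K=K(\phi)$ be the resulting bound on the number of cells. Given finite $W_1\subseteq X$ and $W_2\subseteq Y$, apply the construction above to $W_2$; this partitions $W_1$ into at most $K$ pieces, so some cell $\Delta$ satisfies $|\Delta\cap W_1|\geq|W_1|/K$. Put $U_1=\Delta\cap W_1$. At most $\tfrac14|W_2|$ of the parameters $b\in W_2$ cross $\Delta$, and of the remaining ``good'' parameters at least half lie on a common side, so we obtain $U_2\subseteq W_2$ with $|U_2|\geq\tfrac38|W_2|$ such that either $U_1\times U_2\subseteq R$ or $U_1\times U_2\cap R=\varnothing$. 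Taking $\delta=\min\{1/K,\,1/4\}$ completes this case.

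The general case must instead be obtained by working with the distal cell decomposition directly in the $n$-partite setting, and this is where essentially all the difficulty lies; it is the content of Corollary~4.5 of \citep{chernikov_starchenko_2018}. One is tempted to reduce coordinate by coordinate: viewing $R\subseteq X_1\times\dots\times X_n$ as $\phi(x_1;x_2,\dots,x_n)$, the argument above gives $U_1\subseteq W_1$ with $|U_1|\geq|W_1|/K$ together with a definable set $P\subseteq X_2\times\dots\times X_n$ — namely $\bigcap_{u\in U_1}\{(x_2,\dots,x_n):R(u,x_2,\dots,x_n)\}$, or its complement — that meets $W_2\times\dots\times W_n$ in a constant-density subset and along which $U_1\times(\cdot)$ is $R$-homogeneous, and one would then apply the $(n-1)$-ary statement to $P$. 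But the $(n-1)$-ary statement returns a homogeneous sub-box with no say over which side of its dichotomy one lands on, whereas here one needs a sub-box lying \emph{inside} $P$; a positive-density subset of a grid need not contain any large sub-grid, so this naive recursion fails. The correct approach iterates the polynomial cell bound together with the cutting argument once in each of the $n$ coordinates, producing a partition of $W_1\times\dots\times W_n$ into a bounded number of sub-boxes, all but a negligible portion of them $R$-homogeneous and each side of each box a constant fraction of the corresponding $W_i$; a homogeneous box with all sides simultaneously of constant density can then be read off. Carrying this out so that the number of boxes stays bounded, every side stays a constant fraction, and the exceptional part stays negligible — not merely in count but robustly enough to survive the extraction of a large homogeneous box — is the crux, and is exactly what \citep{chernikov_starchenko_2018} supplies.
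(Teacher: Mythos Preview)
The paper does not prove this statement at all: it is recorded as a Fact and simply attributed to Corollary~4.5 of \citep{chernikov_starchenko_2018}. Your proposal is a correct expository sketch of the ideas behind that corollary---distal cell decompositions plus the cutting/\(\epsilon\)-net argument, with the binary case done in full and the \(n\)-ary case correctly identified as the nontrivial part---but in the end you too defer to \citep{chernikov_starchenko_2018} for the general case, so your treatment and the paper's agree in substance (both are citations), yours being the more informative of the two.
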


For some examples of the varied interactions between distality and additive combinatorics, we refer the reader to the papers \citep{anderson}, \citep{chernikov_galvin_starchnkoe}, and \citep{conant_pillay_terry}.

\section{Main Result}We now have all the necessary prerequisites. Our result holds in any group in which the $6$-ary relation $R(\overline{x},\overline{y})\equiv x_1x_2x_3=y_1y_2y_3$ has the strong Erd\H{o}s-Hajnal property; by Fact 2.7, it applies in particular to any group definable in a distal structure. For the rest of this section thus fix a group $G$ in which $R$ has the strong Erd\H{o}s-Hajnal property.

\begin{lemma}There is a constant $c_0>0$ such that, for any $\alpha\in(0,1)$, there is some $\varepsilon\in(0,1)$ with the following property: every finite subset $Y\subseteq G$ of size at least $8/\alpha$ contains subsets $U,V,W\subseteq Y$, all of size at least $\varepsilon|Y|^{c_0+1}/|Y^3|^{c_0}$, such that $|UVW|\leqslant\alpha |Y|$.
\end{lemma}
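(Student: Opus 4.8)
The plan is to iterate the strong Erdős–Hajnal property of $R$: fix a constant $\delta\in(0,1)$ witnessing it, put $c_0=\log(1/\delta)$, and, for a given $\alpha$, put $\varepsilon=\delta\alpha^{c_0}$. Write $\beta=\varepsilon|Y|^{c_0+1}/|Y^3|^{c_0}$ for the target size. First dispose of the easy case $\beta\leqslant 1$: there one may simply take $U=V=W=\{y\}$ for any $y\in Y$, since then $|UVW|=1\leqslant\alpha|Y|$ (as $|Y|\geqslant 8/\alpha$) and each of $U,V,W$ has size $1\geqslant\beta$. So assume from now on that $\beta>1$; note also that $|Y^3|\geqslant|Y|$, since any translate $yYy'$ with $y,y'\in Y$ is a subset of $Y^3$ of size $|Y|$.

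For the main case I would build a sequence of triples of subsets of $Y$, beginning with $(U_0,V_0,W_0)=(Y,Y,Y)$. Given $(U_j,V_j,W_j)$, apply the strong Erdős–Hajnal property of $R$ with the six sets taken to be $U_j,V_j,W_j,U_j,V_j,W_j$; this yields an $R$-homogeneous tuple $(A_1,\dots,A_6)$ with $A_1,A_4\subseteq U_j$, $A_2,A_5\subseteq V_j$ and $A_3,A_6\subseteq W_j$, each of relative size at least $\delta$. If $A_1\times\dots\times A_6\subseteq R$ then $a_1a_2a_3=a_4a_5a_6$ holds for all choices, which (fixing $a_4,a_5,a_6$, then fixing two of $a_1,a_2,a_3$) forces each $A_i$ to be a singleton and hence $\min(|U_j|,|V_j|,|W_j|)\leqslant 1/\delta$. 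Otherwise $A_1A_2A_3$ and $A_4A_5A_6$ are disjoint subsets of $U_jV_jW_j$, so, after relabelling if necessary, $|A_1A_2A_3|\leqslant|U_jV_jW_j|/2$, and I set $(U_{j+1},V_{j+1},W_{j+1})=(A_1,A_2,A_3)$. Run this for $t=\lceil\log(|Y^3|/(\alpha|Y|))\rceil$ steps; note $t\geqslant 1$, since $|Y^3|/(\alpha|Y|)\geqslant 1/\alpha>1$.

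Two inductive bounds then do the work: $\min(|U_j|,|V_j|,|W_j|)\geqslant\delta^j|Y|$ and $|U_jV_jW_j|\leqslant|Y^3|/2^j$ for all $j\leqslant t$. The first also rules out the $A_1\times\dots\times A_6\subseteq R$ case at every step $j\leqslant t$: the current minimum is at least $\delta^{j-1}|Y|\geqslant\delta^{t-1}|Y|=\delta^{-1}(\delta^t|Y|)$, and the choices of $t$, $c_0$, $\varepsilon$ are rigged so that $\delta^t|Y|\geqslant\varepsilon|Y|^{c_0+1}/|Y^3|^{c_0}=\beta>1$, whence the minimum exceeds $1/\delta$ and the singleton case cannot occur. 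Finally, outputting $(U,V,W)=(U_t,V_t,W_t)$ gives $|UVW|\leqslant|Y^3|/2^t\leqslant\alpha|Y|$ and $\min(|U|,|V|,|W|)\geqslant\delta^t|Y|\geqslant\beta$, as required. The one delicate point is the arithmetic tying $t$, $c_0$ and $\varepsilon$ together — concretely the inequality $\delta^t|Y|\geqslant\beta$, which rests on the identity $\delta^{\log N}=N^{-c_0}$ with $N=|Y^3|/(\alpha|Y|)$ together with $t\leqslant\log N+1$; this is also exactly where the exponent $c_0=\log(1/\delta)$, and hence the polynomial-in-doubling shape of the bound, is forced. Everything else — the structural dichotomy coming from homogeneity and all the counting — is routine.
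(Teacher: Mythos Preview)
Your proof is correct and follows essentially the same strategy as the paper: fix $\delta$ from the strong Erd\H{o}s--Hajnal property, set $c_0=\log(1/\delta)$ and $\varepsilon=\delta\alpha^{c_0}$, and iterate the dichotomy (homogeneous $\Rightarrow$ either all output sets are singletons or the two triple products are disjoint) roughly $\log(|Y^3|/\alpha|Y|)$ times to halve $|UVW|$ at each step while shrinking $U,V,W$ by a factor $\delta$. The only cosmetic differences are indexing ($0$-based versus $1$-based) and the treatment of the boundary case (you take singletons when $\beta\leqslant 1$, the paper takes doubletons when $\delta^{n-2}|Y|<2$); the constants and the arithmetic tying $t$, $c_0$, $\varepsilon$ together are identical.
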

\begin{proof}
Let $\delta\in(0,1)$ be a constant witnessing the strong Erd\H{o}s-Hajnal property for $R$; ie any finite subsets $U_1,U_2,U_3,V_1,V_2,V_3\subseteq G$ contain respective subsets $   U'_1,U'_2,U'_3,V'_1,V'_2,V'_3$, all of density at least $\delta$, such that $(U'_1,\dots,V'_3)$ is $R$-homogeneous. I claim we may take $c_0=\log(1/\delta)$; to see this, fix $\alpha\in(0,1)$ and let $\varepsilon=\delta\alpha^{c_0}$.

First note that if $U_1,U_2,U_3,V_1,V_2,V_3$ are subsets of $G$ of size at least $2$, then we cannot have $u_1u_2u_3=v_1v_2v_3$ for all $u_i\in U_i$ and $v_i\in V_i$. Thus if $(U_1,\dots,V_3)$ is $R$-homogeneous, then in fact $U_1U_2U_3$ and $V_1V_2V_3$ are disjoint, and so in particular one of $U_1U_2U_3$ and $V_1V_2V_3$ has size at most $|W_1W_2W_3|/2$ for any sets $W_i\supseteq U_i\cup V_i$. The strong Erd\H{o}s-Hajnal property hence tells us that any finite subsets $W_1,W_2,W_3\subseteq G$, all of size at least $2$, contain respective subsets $U_1,U_2,U_3$ with $|U_i|\geqslant\delta|W_i|$ for each $i$ and with $|U_1U_2U_3|\leqslant |W_1W_2W_3|/2$.

Now we simply iterate this fact; fix a finite subset $Y\subseteq G$ with $|Y|\geqslant 8/\alpha$ and let $U_1=V_1=W_1=Y$. Applying the previous remark, we inductively define sets $U_n,V_n,W_n$ such that (i) $U_{n+1},V_{n+1},W_{n+1}$ are contained in $U_n,V_n,W_n$, respectively, (ii) $U_{n+1},V_{n+1},W_{n+1}$ have size at least $\delta|U_n|,\delta|V_n|,\delta|W_n|$, respectively, and (iii) $|U_{n+1}V_{n+1}W_{n+1}|\leqslant|U_nV_nW_n|/2$; the $(n+1)$-th tuple of sets in this sequence can be constructed as long as all of $U_n,V_n,W_n$ have size at least $2$, and hence as long as $\delta^{n-1}|Y|\geqslant 2$.

Now let $n=\lceil \log(|Y^3|/\alpha|Y|)\rceil+1$. Then $$\delta^{n-2}\geqslant \delta^{\log(|Y^3|/\alpha|Y|)}=(\alpha|Y|/|Y^3|)^{\log(1/\delta)}=(\alpha|Y|/|Y^3|)^{c_0},$$ whence $\delta^{n-1}|Y|\geqslant\varepsilon|Y|^{c_0+1}/|Y^3|^{c_0}$, and so it suffices to find $U,V,W\subseteq Y$ of size at least $\delta^{n-1}|Y|$ and with $|UVW|\leqslant\alpha|Y|$. If $\delta^{n-2}|Y|\geqslant 2$, then the sets $U_n,V_n,W_n$ exist and are of appropriate size, and further satisfy $|U_nV_nW_n|\leqslant |Y^3|/2^{n-1}\leqslant\alpha|Y|$, as needed. If instead $2>\delta^{n-2}|Y|$, then in particular $2>\delta^{n-1}|Y|$, and so we may take $U,V,W$ to be any subsets of $Y$ of size at least $2$; these will satisfy $|UVW|\leqslant 8\leqslant \alpha|Y|$, again as needed.
\end{proof}

\begin{corollary}
There are constants $c_1>0$ and $\varepsilon_1\in(0,1)$ such that every finite subset $Y\subseteq G$ of size at least $16$ contains a subset $Z\subseteq Y$, of size at least $\varepsilon_1|Y|^{c_1+1}/|Y^3|^{c_1}$, such that $|Z^{-1}ZZ^{-1}|\leqslant|Y|/2$.
\end{corollary}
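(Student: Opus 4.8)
The plan is to mimic the proof of Lemma~3.1, iterating its combinatorial core --- the ``shrinking step'' that any finite $W_1,W_2,W_3$ of size at least $2$ contain $U_i\subseteq W_i$ with $|U_i|\geqslant\delta|W_i|$ and $|U_1U_2U_3|\leqslant|W_1W_2W_3|/2$ --- but applied this time to the triple $(Y^{-1},Y,Y^{-1})$ in place of $(Y,Y,Y)$. First I would pin down the initial product size: since $Y^2y\subseteq Y^3$ for $y\in Y$ we have $|Y^2|\leqslant|Y^3|$, so Fact~2.1 applied to $Y$ and to $Y^{-1}$ gives $|YY^{-1}|,|Y^{-1}Y|\leqslant(|Y^3|/|Y|)^2|Y|$, and one further application of Ruzsa's triangle inequality bounds $|Y^{-1}YY^{-1}|$ by $(|Y^3|/|Y|)^3|Y|$.

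Iterating the shrinking step $n$ times on $(Y^{-1},Y,Y^{-1})$ --- note that it preserves the ``inverse shape'' of the outer two sets, so the iterates have the form $(A^{-1},B,C^{-1})$ with $A,B,C\subseteq Y$ --- then yields, exactly as in Lemma~3.1 (with the same endgame: once the sets would fall below size $2$, revert to arbitrary two-element sets and invoke $|Y|\geqslant 16$), subsets $A,B,C\subseteq Y$ with $|A|,|B|,|C|\geqslant\delta^{\,n}|Y|$ and $|A^{-1}BC^{-1}|\leqslant(|Y^3|/|Y|)^3|Y|/2^{\,n}$.

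The main obstacle will be passing from the three sets $A,B,C$ to a single set $Z\subseteq Y$ with $|Z^{-1}ZZ^{-1}|\leqslant|Y|/2$; one cannot simply take $Z$ inside $A\cap B\cap C$, as that intersection may be empty. I would try to resolve this with Ruzsa calculus: truncating so that $|A|=|B|=|C|=:s$ (which only decreases $|A^{-1}BC^{-1}|$), and using $|A^{-1}B|,|BC^{-1}|\leqslant|A^{-1}BC^{-1}|$, two applications of Ruzsa's triangle inequality give
\[
|B^{-1}BB^{-1}|\ \leqslant\ \frac{|A^{-1}B|\cdot|A^{-1}BC^{-1}|\cdot|BC^{-1}|}{s^{2}}\ \leqslant\ \frac{|A^{-1}BC^{-1}|^{3}}{s^{2}}\ \leqslant\ \frac{(|Y^3|/|Y|)^{9}\,|Y|}{(8\delta^{2})^{\,n}}\,,
\]
so that taking $n$ of order $\log(|Y^3|/|Y|)$ forces the right-hand side below $|Y|/2$ while keeping $|B|\geqslant\delta^{\,n}|Y|\geqslant\varepsilon_1|Y|^{c_1+1}/|Y^3|^{c_1}$ for suitable $c_1=O(\log(1/\delta))$ and $\varepsilon_1\in(0,1)$; then $Z:=B$ does the job. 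The delicate point is that this last estimate only closes when $8\delta^{2}>1$, so handling a small regularity constant --- presumably through a more efficient ``three sets to one'' passage, e.g.\ a covering argument placing $A,B,C$ in a common translate of an approximate subgroup --- is the heart of the matter.
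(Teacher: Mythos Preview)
Your proposal correctly locates the crux---passing from three sets with small $|A^{-1}BC^{-1}|$ to a single $Z\subseteq Y$ with small $|Z^{-1}ZZ^{-1}|$---but does not resolve it. The Ruzsa route you sketch genuinely fails once $\delta\leqslant 1/\sqrt{8}$, and there is no reason the strong Erd\H{o}s--Hajnal constant should be large; the suggested fallback (a covering argument into a common approximate-group coset) would not obviously place $Z$ back inside $Y$, and in any case you have not carried it out. So as written the proposal has a gap at precisely the point you flag as ``the heart of the matter''.

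The paper's proof sidesteps the whole issue with a direct averaging trick that works for arbitrary $\delta$. One applies Lemma~3.1 with $\alpha=1/2$ as stated---to $(Y,Y,Y)$, with no need for the detour through $(Y^{-1},Y,Y^{-1})$---obtaining $U,V,W\subseteq Y$ with $|UVW|\leqslant|Y|/2$, and then sets $Z_{g,h}:=U^{-1}g\cap V\cap hW^{-1}$ for $g,h\in G$. The point is that $Z_{g,h}^{-1}Z_{g,h}Z_{g,h}^{-1}\subseteq g^{-1}(UVW)h^{-1}$, so $|Z_{g,h}^{-1}Z_{g,h}Z_{g,h}^{-1}|\leqslant|Y|/2$ for \emph{every} $(g,h)$, with no dependence on $\delta$ at all; and since $\sum_{g,h}|Z_{g,h}|=|U||V||W|$ (the map $(g,h,z)\mapsto(gz^{-1},z,z^{-1}h)$ is a bijection onto $U\times V\times W$) while $Z_{g,h}\neq\varnothing$ forces $(g,h)\in UV\times VW$, a set of size at most $|Y|^2/4$, some $Z_{g,h}$ has size at least $4|U||V||W|/|Y|^2\geqslant 4\varepsilon_0^3|Y|/k^{3c_0}$. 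Taking $Z=Z_{g,h}\subseteq V\subseteq Y$, with $c_1=3c_0$ and $\varepsilon_1=4\varepsilon_0^3$, finishes the proof. The missing idea, then, is this ``shifted triple intersection'': the middle factor $V$ keeps $Z$ inside $Y$, while the outer factors $U^{-1}g$ and $hW^{-1}$ force $Z^{-1}ZZ^{-1}$ into a translate of $UVW$.
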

\begin{proof}
Let $c_0$ be given by Lemma 3.1, and let $\varepsilon_0\in(0,1)$ witness the lemma for the case $\alpha=1/2$; I claim we may take $c_1=3c_0$ and $\varepsilon_1=4\varepsilon_0^3$. To see this, fix a finite subset $Y\subseteq G$ of size at least $16$, and for notational convenience let $k=|Y^3|/|Y|$. By Lemma 3.1 we can find subsets $U,V,W\subseteq Y$, each of size at least $\varepsilon_0|Y|/k^{c_0}$, such that $|UVW|\leqslant|Y|/2$.

Now, for each $g,h\in G$, let $Z_{g,h}=U^{-1}g\cap V\cap h W^{-1}$. The map $(g,h,z)\mapsto (gz^{-1},z,z^{-1}h)$ gives a bijection from $\{(g,h,z):g,h\in G,z\in Z_{g,h}\}$ to $U\times V\times W$, with inverse given by $(u,v,w)\mapsto (uv,vw,v)$, whence $\sum_{g,h\in G}|Z_{g,h}|=|U||V||W|$. On the other hand, $Z_{g,h}\neq\varnothing$ only if $g\in UV$ and $h\in VW$. The size of these sets is bounded above by $|UVW|\leqslant |Y|/2$, so we have $|U||V||W|\leqslant (|Y|^2/4)\sup_{g,h\in G}|Z_{g,h}|$, and there thus exist some $g,h\in G$ such that $$|Z_{g,h}|\geqslant 4|U||V||W|/|Y|^2\geqslant 4\varepsilon_0^3|Y|/k^{3c_0}=\varepsilon_1|Y|^{c_1+1}/|Y^3|^{c_1}.$$ On the other hand, $Z_{g,h}^{-1}Z_{g,h}Z_{g,h}^{-1}\subseteq g^{-1}UVW h^{-1}$, whence $|Z_{g,h}^{-1}Z_{g,h}Z_{g,h}^{-1}|\leqslant |UVW|\leqslant |Y|/2$. So taking $Z=Z_{g,h}$ gives the desired result.
\end{proof}

\begin{theorem}
There are constants $c_2>0$ and $\varepsilon_2\in (0,1)$ such that the following holds: if $X\subseteq G$ is finite and distinct from $\{1\}$, and $k=|X^2|/|X|$, then $X$ contains a product-free subset of size at least $\varepsilon_2|X|/k^{c_2}$.
\end{theorem}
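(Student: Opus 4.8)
The plan is to feed the approximate-group hypothesis through the combinatorial machinery already assembled, produce a small-product-set core via Corollary~3.3, and then use a Ruzsa-type covering/pigeonhole argument to locate a large product-free subset. First I would reduce to the case of a set with small \emph{tripling}: given $X$ with $|X^2|\leqslant k|X|$, Fact~2.2 supplies a subset $Y\subseteq X$ with $|Y|\geqslant |X|/k$ and $|Y^3|\leqslant k^3|Y|$; any product-free subset of $Y$ is a product-free subset of $X$, so it suffices to work with $Y$, at the cost of replacing $X$ by $Y$ and absorbing a factor $k^{O(1)}$ into $\varepsilon_2$ and $c_2$. (If $|Y|<16$ one handles things by hand, using that $X\neq\{1\}$; this is the trivial edge case.)

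Next I would apply Corollary~3.3 to $Y$: since $|Y^3|/|Y|\leqslant k^3$, the corollary yields $Z\subseteq Y$ with $|Z|\geqslant \varepsilon_1|Y|/k^{3c_1}$ and, crucially, $|Z^{-1}ZZ^{-1}|\leqslant |Y|/2$. Write $Z^{-1}ZZ^{-1}=:S$, a symmetric set with $1\in S$ and $|S|\leqslant |Y|/2$. The key point is that $Z^{-1}Z\subseteq S$ and $ZZ^{-1}\subseteq S$ as well (taking the middle factor to be a fixed element of $Z$), so $S$ controls all the relevant products among elements of $Z$. Now I would look for a coset-like translate inside $Z$ that avoids $S$. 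Concretely, consider the product set $Z^{-1}Z$; I want to find $a,b\in Z$ with $a^{-1}b$ ``far from'' $S$ in the sense that the left-translate $aT$ of some large $T\subseteq Z$ is product-free. The cleanest route: among the $|Z|^2$ pairs $(x,y)\in Z\times Z$, the number with $xy^{-1}\in S$ is at most $|S|\cdot|Z|\leqslant (|Y|/2)|Z|$, which need not be small relative to $|Z|^2$ — so a direct count is not enough, and this is where the main obstacle lies.

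To get around it I would instead run a pigeonhole on translates: partition (a large part of) $Y$, or rather cover $Z$, by translates of a small product-free ``test set'' and extract one heavy translate. Alternatively — and this is the approach I expect to be cleanest — apply Corollary~3.3 a second time, or apply the $Z_{g,h}$ construction from the proof of Corollary~3.3 directly to $Z$, to pass to $Z'\subseteq Z$ with $|Z'^{-1}Z'Z'^{-1}|\leqslant |Z|/2$ and iterate; after $O(\log)$-many steps the triple-product set becomes a single point, which forces the surviving set to be product-free after a single translation (a set $Z'$ with $Z'^{-1}Z'Z'^{-1}=\{1\}$ is a coset of a subgroup, hence for $z\in Z'$ the translate $zZ'$... ) — here one must be careful, since cosets of subgroups are emphatically \emph{not} product-free. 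So the correct final move is: once $|Z^{-1}ZZ^{-1}|\leqslant |Y|/2 < |Y|$, there exists $t\in Y\setminus Z^{-1}ZZ^{-1}$ (using $|Z^{-1}ZZ^{-1}|<|Y|$), and then I claim a suitable translate $tZ$ or $Zt$ — more precisely, one shows $Zt\cap (Zt)^2=\varnothing$ fails in general, so instead one takes the product-free set to be $Z$ itself \emph{intersected} against the complement: the set $Z\setminus ZZ$ is automatically product-free, and the content is that it is large. Since $|ZZ|\leqslant |Z^{-1}ZZ^{-1}|\cdot(\text{something})$...

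Let me restate the endgame properly, as this is the crux. Having $Z\subseteq Y$ with $|Z^{-1}ZZ^{-1}|\leqslant|Y|/2$, note $ZZ\subseteq Z(Z^{-1}ZZ^{-1})$ is false; rather, the right statement is obtained by choosing $t$ outside $Z^{-1}Z$ (which has size $\leqslant |Z^{-1}ZZ^{-1}|\leqslant |Y|/2<|Y|\leqslant|G|$ only if $G$ is finite — but $G$ need not be finite, so instead pick $t\in Y$ with $t\notin Z^{-1}Z$, possible since $|Z^{-1}Z|<|Y|=|Y|$... again this needs $Z^{-1}Z\subsetneq Y$, which holds as $|Z^{-1}Z|\leqslant|Y|/2$). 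Then $tZ$ satisfies: if $ta,tb\in tZ$ and $(ta)(tb)\in tZ$ then $tatb=tc$ for some $c\in Z$, so $atb=c$, so $t=a^{-1}cb^{-1}\in Z^{-1}ZZ^{-1}$, contradiction — wait, $atb=c$ gives $t=a^{-1}cb^{-1}$, and $a,b,c\in Z$ so indeed $t\in Z^{-1}ZZ^{-1}$, contradicting the choice of $t$. Hence $tZ$ is product-free and $|tZ|=|Z|\geqslant\varepsilon_1|Y|/k^{3c_1}\geqslant(\varepsilon_1/k)\cdot|X|/k^{3c_1}$. Setting $c_2=3c_1+1$ and $\varepsilon_2=\varepsilon_1$ (and checking the small-$|Y|$ cases separately) completes the proof; the main obstacle, now resolved, was identifying that the translate $tZ$ rather than $Z$ itself is the product-free set, and that $t\notin Z^{-1}ZZ^{-1}$ is exactly the condition making it work, with such $t$ available inside $Y$ precisely because Corollary~3.3 bounds $|Z^{-1}ZZ^{-1}|$ by $|Y|/2$.
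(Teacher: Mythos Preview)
Your endgame has a genuine gap: the set $tZ$ you produce is indeed product-free (your verification that $t\notin Z^{-1}ZZ^{-1}$ forces $(tZ)^2\cap tZ=\varnothing$ is correct), but it need not be a subset of $X$. You only know $t\in Y\subseteq X$ and $Z\subseteq Y\subseteq X$, so $tZ\subseteq X^2$; nothing forces $tZ\subseteq X$. The theorem asks for a product-free subset of $X$, so you are not done.

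The fix is exactly what the paper does and what your exploratory paragraph gestured at but then abandoned: instead of taking the whole translate $tZ$, take $Y_g:=gZ\cap Y$ and show that for some $g\notin Z^{-1}ZZ^{-1}$ this intersection is still large. This requires an honest averaging argument, not just the existence of a single $t\in Y\setminus Z^{-1}ZZ^{-1}$. One computes $\sum_{g\in G}|Y_g|=|Y||Z|$, observes that the contribution from $g\in Z^{-1}ZZ^{-1}$ is at most $|Z^{-1}ZZ^{-1}|\cdot|Z|\leqslant|Y||Z|/2$, and then---crucially---bounds the number of $g$ contributing at all by $|YZ^{-1}|\leqslant|XX^{-1}|\leqslant k^2|X|\leqslant k^3|Y|$, invoking Fact~2.1. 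This yields some $g\notin Z^{-1}ZZ^{-1}$ with $|Y_g|\geqslant|Z|/(2k^3)$, costing an extra factor of $k^3$ in the exponent (so $c_2=3c_1+4$ rather than your $3c_1+1$). Without this step, merely knowing $|Z^{-1}ZZ^{-1}|\leqslant|Y|/2$ gives you a translate outside $Z^{-1}ZZ^{-1}$, but gives no control on how much of that translate lands back in $X$.
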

\begin{proof}
Let $c_1$ and $\varepsilon_1$ be given by Corollary 3.2; I claim we may take $c_2=3c_1+4$ and $\varepsilon_2=\min\{\varepsilon_1/2,1/16\}$. Thus fix a finite subset $X\subseteq G$ distinct from $\{1\}$. If $|X|<16k$, then $\varepsilon_2|X|/k^{c_2}< 16k/16k^{c_2}\leqslant 1$, so we may pick any $z\in X\setminus\{1\}$ and then $\{z\}$ will be a product-free subset of $X$ of appropriate size. Hence we may assume $|X|\geqslant 16k$. By Fact 2.2, there is a subset $Y\subseteq X$ with $|Y|\geqslant |X|/k$, hence $|Y|\geqslant 16$, and $|Y^3|\leqslant k^3|Y|$. By Corollary 3.2, there is then a subset $Z\subseteq Y$ with $|Z|\geqslant \varepsilon_1|Y|/k^{3c_1}$ and $|Z^{-1}ZZ^{-1}|\leqslant |Y|/2$.

For each $g\in G$, let $Y_g=gZ\cap Y$. The map $(g,y)\mapsto (y,g^{-1}y)$ gives a bijection from $\{(g,y):g\in G,y\in Y_g\}$ to $Y\times Z$, whence $\sum_{g\in G}|Y_g|=|Y||Z|$. Moreover, $|Y_g|\leqslant|gZ|=|Z|$ for all $g\in G$, and $|Z^{-1}ZZ^{-1}|\leqslant |Y|/2$, so $\sum_{g\in Z^{-1}ZZ^{-1}}|Y_g|\leqslant |Y||Z|/2$; combining these bounds gives $\sum_{g\notin Z^{-1}ZZ^{-1}}|Y_g|\geqslant |Y||Z|/2$.

On the other hand, $Y_g\neq\varnothing$ only if $g\in YZ^{-1}$. Since $Y,Z\subseteq X$ and $|X^2|\leqslant k|X|$, by Fact 2.1 we have $|YZ^{-1}|\leqslant |XX^{-1}|\leqslant k^2|X|\leqslant k^3|Y|$. So in particular $$|Y||Z|/2\leqslant k^3|Y|\sup_{g\notin Z^{-1}ZZ^{-1}}|Y_g|,$$ and there thus exists some $g\in G\setminus Z^{-1}ZZ^{-1}$ such that $|Y_g|\geqslant |Z|/2k^3$.

By definition, $Y_g\subseteq Y\subseteq X$. We further have $|Y_g|\geqslant \varepsilon_1|Y|/2k^{3c_1+3}$ since $|Z|\geqslant \varepsilon_1|Y|/k^{3c_1}$, and therefore $$|Y_g|\geqslant \varepsilon_1|X|/2k^{3c_1+4}\geqslant\varepsilon_2|X|/k^{c_2}$$ since $|Y|\geqslant |X|/k$. So we need only show that $Y_g$ is product-free and we will be done; for this, since $Y_g\subseteq gZ$, it suffices to show that $gZ$ is product-free. Thus suppose otherwise; then there are $z_1,z_2,z_3\in Z$ such that $gz_1gz_2=gz_3$. Rearranging this gives $g=z_1^{-1}z_3z_2^{-1}$, contradicting that $g\notin Z^{-1}ZZ^{-1}$, and so we are done.
\end{proof}

This concludes the main result. For completeness, we would like to note an alternative proof of Theorem 3.3 in the case that $G=\mathrm{GL}_n(\mathbb{C})$. The $k$-approximate subgroups of $G$ were classified qualitatively by Hrushovski in \citep{hrushovski}, and then with polynomial dependence on $k$ by Breuillard, Green, and Tao in \citep{breuillard_green_tao}; the following is an immediate consequence of Theorem 2.5 in \citep{breuillard_green_tao}.

\begin{fact}
    For every $n$, there are constants $c>0$ and $\varepsilon\in(0,1)$ such that the following holds: for any finite $k$-approximate group $X\subseteq\mathrm{GL}_n(\mathbb{C})$, there is a nilpotent group $H\leqslant\mathrm{GL}_n(\mathbb{C})$ of step at most $n-1$ and an element $g\in \mathrm{GL}_n(\mathbb{C})$ such that $|X\cap gH|\geqslant\varepsilon|X|/k^c$.
\end{fact}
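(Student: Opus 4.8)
The plan is to read the statement off directly from the structure theorem for approximate subgroups of linear groups, the only extra ingredient being a pigeonhole. Theorem 2.5 of \citep{breuillard_green_tao} supplies a constant $C=C(n)$ such that every finite $k$-approximate subgroup $X\subseteq\mathrm{GL}_n(\mathbb{C})$ is covered by at most $k^{C}$ translates of a coset nilprogression $gQ$, where $Q$ is a nilprogression of rank at most $C$ sitting inside a nilpotent subgroup $H\leqslant\mathrm{GL}_n(\mathbb{C})$ whose nilpotency class is at most $n-1$; the class bound is the feature of their theorem particular to the linear setting, coming from the structure of nilpotent subgroups of $\mathrm{GL}_n(\mathbb{C})$. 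I would quote this essentially verbatim.

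Next I would pass from the covering to a single coset by pigeonhole. Writing the covering as $X\subseteq\bigcup_{i=1}^m t_igQ$ with $m\leqslant k^C$, each set $t_igQ$ lies in the left coset $t_igH$ of $H$ (and if the translates in \citep{breuillard_green_tao} are taken on the right, $t_igQ$ is replaced by $gQt_i$, which lies in a left coset of the conjugate $t_i^{-1}Ht_i$, still nilpotent of step at most $n-1$, so this changes nothing essential). Hence $|X|\leqslant\sum_{i=1}^m|X\cap t_igH|\leqslant m\max_i|X\cap t_igH|$, so some index $i_0$ satisfies $|X\cap t_{i_0}gH|\geqslant|X|/m\geqslant|X|/k^C$. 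Taking $c:=C$, $\varepsilon:=1/2$, the element $t_{i_0}g$ in the role of $g$, and $H$ (or its relevant conjugate) in the role of $H$ yields $|X\cap gH|\geqslant|X|/k^c\geqslant\varepsilon|X|/k^c$; note $X\neq\varnothing$ so this is meaningful, and no hypothesis on $X$ beyond being a $k$-approximate group is used.

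The only obstacle here is citational rather than mathematical: one must confirm that Theorem 2.5 of \citep{breuillard_green_tao} is genuinely stated with covering number polynomial in $k$ of degree depending only on $n$ — this is precisely the quantitative strengthening of the qualitative classification in \citep{hrushovski} achieved there — and that the nilpotent subgroup it produces can be taken of class at most $n-1$. The latter is part of what \citep{breuillard_green_tao} establish, and ultimately reflects the structure theory of nilpotent subgroups of $\mathrm{GL}_n(\mathbb{C})$: the nilpotent groups arising in their construction are, after conjugation, block-triangular with scalar diagonal blocks, and such a group has nilpotency class equal to that of its unipotent part, hence at most $n-1$. Since neither point requires a new idea, the statement is recorded above merely as a Fact.
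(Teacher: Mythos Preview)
Your proposal is correct and matches the paper's approach: the paper records this statement as a Fact with no proof beyond the remark that it is ``an immediate consequence of Theorem 2.5 in \citep{breuillard_green_tao}'', and your pigeonhole argument is exactly the obvious way to make that immediacy explicit. The citational caveats you flag (polynomial covering number depending only on $n$, and nilpotency class at most $n-1$) are indeed the content of that theorem, so nothing further is needed.
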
 By Fact 2.3 and Corollary A.3, and noting that any non-trivial coset of a subgroup is product-free, Fact 3.4 implies Theorem 3.3 in the case $G=\mathrm{GL}_n(\mathbb{C})$, with the same polynomial dependence on $k$. It is tempting to ask whether the dependence on $k$ can be removed entirely, either in this case or in the more general distal context:
\begin{question}
Suppose $G$ is a group definable in a distal structure. Is there a constant $\delta>0$ such that every finite subset $X\subseteq G$ distinct from $\{1\}$ contains a product-free subset of size at least $\delta|X|$? How about in the case $G=\mathrm{GL}_n(\mathbb{C})$?
\end{question}For completeness we include some remarks on this question in the next section. 

\section{Remarks on Question 3.5}
In this section we will briefly discuss Question 3.5; these are standard observations but perhaps worth noting nonetheless. Let $G$ be a group definable in a distal structure and let $X\subseteq G$ be a finite subset distinct from $\{1\}$; we wish to find a product-free subset of $X$ of fixed positive density. On the one hand, if there are `very few' pairs $(x,y)\in X\times X$ with $xy\in X$, say at most $k|X|$-many for some fixed $k\geqslant 1$, then one can check that $X$ contains a product-free subset of density at least $1/O(k)$; indeed this is true without any hypotheses on $G$ whatsoever. 

On the other hand, if `most' of the pairs $(x,y)\in X\times X$ satisfy $xy\in X$, then we can couple Theorem 3.3 with the Balog-Szemerédi-Gowers theorem to obtain the desired set. Specifically, if there are at least $|X|^2/k$ such pairs, then $X$ contains a product-free subset of density at least $1/O(k^{O(1)})$; let us quickly sketch the proof of this.

The first relevant observation is that every dense subset of a `coset' of an approximate group contains a dense product-free subset. More precisely, suppose that $W\subseteq G$ is a finite $k$-approximate group, that $u\in G$ is arbitrary, and that $V$ is a subset of $uW$ distinct from $\{1\}$ and of size $|W|/O(k^{O(1)})$. We have two cases; if $u\notin W^{-1}WW^{-1}$, then $uW$ and hence $V$ as well are already product-free, so there is nothing to show. If instead $u$ does lie in $W^{-1}WW^{-1}=W^3$, then $(uW)^2$ is contained in $W^8$, whence $uW$ has doubling at most $k^7$. Since $|uW|/|V|=O(k^{O(1)})$, thus $V$ has doubling $O(k^{O(1)})$, and now by Theorem 3.3 $V$ contains a product-free subset of density $1/O(k^{O(1)})$, as needed.

Now fix $k\geqslant 1$, and suppose there are at least $|X|^2/k$ pairs $(x,y)\in X\times X$ with $xy\in X$. By the Fact 2.4, there are then subsets $Y,Z\subseteq X$, each of size $|X|/O(k^{O(1)})$, such that $|YZ|=O(k^{O(1)})|X|$. By Fact 2.3, there is then an $O(k^{O(1)})$-approximate subgroup $W\subseteq G$ of size at most $O(k^{O(1)})|X|$ and an element $u\in G$ such that $|Y\cap uW|$ has size at least $|X|/O(k^{O(1)})$; now one applies the previous paragraph to $V=Y\cap uW$ to obtain a product-free subset of $Y$ of size $|X|/O(k^{O(1)})$, giving the desired result.

So, we are able to handle the two edge cases, where either `very many' or `very few' pairs of elements of $X$ have product landing in $X$. However, it is entirely unclear to us how to handle the more general question, and we thus close our remarks here.

\appendix
\section{Product-free sets in solvable groups}
In \citep{alon_kleitman}, Alon and Kleitman showed that every finite set of non-zero elements of an abelian group contains a sum-free subset of density at least $2/7$. In this section we point out an analogous fact for solvable groups, which follows from a weighted version of Alon and Kleitman's theorem. The proof is a completely standard adaptation of their argument.

First we need the following easy fact, which is observed in \citep{alon_kleitman}; let $\alpha=1/4$.

\begin{fact}
    For every non-zero finite cyclic group $(H,+)$, there is a sum-free subset $I\subseteq H$ such that $|K\cap I|\geqslant \alpha|K|$ for every non-zero subgroup $K\leqslant H$. (Indeed when $H=\mathbb{Z}/n\mathbb{Z}$ one may take $I$ to be the image of $\{\lfloor n/3\rfloor+1,\dots,\lfloor 2n/3\rfloor\}$ in $H$.)
\end{fact}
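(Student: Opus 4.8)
The plan is simply to check that the explicit candidate named in the statement works. Write $H = \mathbb{Z}/n\mathbb{Z}$ with $n \geq 2$, and let $J = \{\lfloor n/3\rfloor + 1, \dots, \lfloor 2n/3\rfloor\}$, so that $I$ is the image of $J$ in $H$. Since $J \subseteq \{1, \dots, n-1\}$, reduction mod $n$ is injective on $J$, so we may identify $I$ with $J$ regarded as a set of residues. Two things must be verified: that $I$ is sum-free, and that $|K \cap I| \geq \frac{1}{4}|K|$ for every non-zero subgroup $K \leq H$.

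For sum-freeness I would picture $H$ inside $\mathbb{R}/\mathbb{Z}$ via $k \mapsto k/n$. Then $I$ lands in the arc $(1/3, 2/3]$, since $\lfloor n/3\rfloor + 1 > n/3$ and $\lfloor 2n/3\rfloor \leq 2n/3$. If $k_1, k_2 \in J$ then $k_1 + k_2 \in (2n/3, 4n/3]$, so $(k_1 + k_2)/n$ lies in the arc $(2/3, 4/3]$, which reduces mod $1$ into $(2/3, 1) \cup [0, 1/3]$ — a set disjoint from $(1/3, 2/3] \supseteq I$. Hence $k_1 + k_2 \notin I$, and $I$ is sum-free. This part is immediate.

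The one computation with any content is the subgroup bound, and the clean point is that $|K \cap I|$ can be evaluated exactly. A non-zero $K \leq H$ has order $m$ with $2 \leq m \mid n$; put $d = n/m$, so that $K$ is the image of $\{0, d, 2d, \dots, (m-1)d\}$. Then $|K \cap I| = \#\{0 \leq j < m : \lfloor n/3\rfloor + 1 \leq jd \leq \lfloor 2n/3\rfloor\}$. Using the elementary identity $\lfloor \lfloor x\rfloor / k\rfloor = \lfloor x/k\rfloor$ (for real $x$, positive integer $k$) together with $n/d = m$: the condition $jd \leq \lfloor 2n/3\rfloor$ is equivalent to $j \leq \lfloor 2m/3\rfloor$, and the condition $jd \geq \lfloor n/3\rfloor + 1$, i.e. $jd > \lfloor n/3\rfloor$, is equivalent to $j \geq \lfloor m/3\rfloor + 1$; moreover both endpoints already lie in $\{0, \dots, m-1\}$. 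Therefore $|K \cap I| = \lfloor 2m/3\rfloor - \lfloor m/3\rfloor$ exactly. A case split on $m \bmod 3$ gives this quantity as $m/3$, $(m-1)/3$, or $(m+1)/3$, and in every case it is at least $m/4 = \frac{1}{4}|K|$ — the tightest being $m \equiv 1 \pmod{3}$, where $m \geq 2$ forces $m \geq 4$ and hence $(m-1)/3 \geq m/4$. This is exactly the desired bound with $\alpha = 1/4$.

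I do not expect a genuine obstacle: the whole argument is a finite amount of floor-function bookkeeping. If anything is worth flagging, it is the temptation to prove the subgroup bound by a soft counting estimate (multiples of $d$ in an interval of length roughly $n/3$), which only yields a bound of about $m/3 - 1$ and so fails for small $m$ unless one treats those cases by hand; computing $|K \cap I| = \lfloor 2m/3\rfloor - \lfloor m/3\rfloor$ on the nose avoids this entirely and is the cleanest route.
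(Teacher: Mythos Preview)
Your argument is correct. The paper does not actually prove this statement: it records it as a ``Fact'' with a parenthetical hint and a citation to Alon--Kleitman, so there is no proof in the paper to compare against. Your write-up supplies exactly the verification the hint invites, and the key computation $|K\cap I|=\lfloor 2m/3\rfloor-\lfloor m/3\rfloor$ via the identity $\lfloor\lfloor x\rfloor/d\rfloor=\lfloor x/d\rfloor$ is the right way to make the bound uniform in $m$; your remark about why the naive ``length over spacing'' estimate would need a separate small-$m$ check is also on point.
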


Now the following is the weighted version of Alon and Kleitman's result, using an identical argument as theirs:

\begin{lemma}
    Let $(G,+,0)$ be an abelian group and $B\subseteq G\setminus\{0\}$ a finite subset, and for each $b\in B$ let $w_b$ be a positive integer. Then there is a sum-free subset $A\subseteq B$ with $\sum_{a\in A}w_a\geqslant \alpha \sum_{b\in B}w_b$.
\end{lemma}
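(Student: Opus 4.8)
The plan is to follow Alon and Kleitman's probabilistic argument verbatim, tracking weights instead of cardinalities. First I would reduce to the cyclic case as follows: there is a finite quotient $q\colon G\twoheadrightarrow H$ with $H$ cyclic such that $q$ is injective on $B\cup\{0\}$, equivalently $q(b)\neq 0$ for all $b\in B$ and $q$ separates the (finitely many) elements of $B$. To get this, one first passes to the finitely generated subgroup $\langle B\rangle$; since $\langle B\rangle$ is finitely generated abelian, for each $b\in B$ there is a homomorphism $\langle B\rangle\to C_b$ onto a finite cyclic group with $b\mapsto$ nonzero, and taking a large enough cyclic quotient of a product of finitely many such maps (using CRT to combine, or just taking a single cyclic quotient of high enough order) yields $q\colon\langle B\rangle\twoheadrightarrow H$ with $H$ cyclic and $q(b)\neq 0$ for every $b\in B$. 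A sum-free subset of $q(B)\subseteq H$ (with the pushed-forward weights $w'_{h}=\sum_{b\in B,\,q(b)=h}w_b$, or more simply: a subset $A\subseteq B$ whose image is sum-free) pulls back to a sum-free subset of $B$ of the same total weight, so it suffices to treat $B\subseteq H\setminus\{0\}$ with $H$ finite cyclic.

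Now fix the set $I\subseteq H$ from Fact A.1, so $I$ is sum-free and $|K\cap I|\geqslant\alpha|K|$ for every nonzero subgroup $K\leqslant H$. Choose $t\in H$ uniformly at random and set $A=A(t)=\{b\in B: tb\in I\}$ — here $tb$ means the image of $b$ under multiplication by $t$ in the ring $\mathbb{Z}/n\mathbb{Z}$ when $H=\mathbb{Z}/n\mathbb{Z}$, so that $A$ is the preimage of the sum-free set $I$ under an (additive) homomorphism-like map. Then $A(t)$ is sum-free for every $t$: if $a_1,a_2,a_1+a_2\in A(t)$ then $ta_1,ta_2,ta_1+ta_2\in I$, contradicting that $I$ is sum-free. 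It remains to compute the expected weight $\mathbb{E}\bigl[\sum_{a\in A(t)}w_a\bigr]=\sum_{b\in B}w_b\cdot\Pr[tb\in I]$. The key point — exactly as in Alon–Kleitman — is that for each fixed $b\in B\setminus\{0\}$, as $t$ ranges over $H$, the element $tb$ ranges over the cyclic subgroup $K_b=\langle b\rangle$, hitting each element of $K_b$ equally often; hence $\Pr[tb\in I]=|K_b\cap I|/|K_b|\geqslant\alpha$. Therefore $\mathbb{E}\bigl[\sum_{a\in A(t)}w_a\bigr]\geqslant\alpha\sum_{b\in B}w_b$, so some choice of $t$ gives a sum-free $A\subseteq B$ with $\sum_{a\in A}w_a\geqslant\alpha\sum_{b\in B}w_b$, as required.

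The only step requiring a little care is the reduction to the cyclic case and the claim that $t\mapsto tb$ is equidistributed on $\langle b\rangle$ — both are completely standard (the latter because multiplication by $b$ is a surjective homomorphism $\mathbb{Z}/n\mathbb{Z}\to\langle b\rangle$ with all fibers of equal size), but they are the places where "abelian" gets used essentially, so I would state them explicitly. Everything else is a word-for-word transcription of the Alon–Kleitman averaging argument with $|A|$ replaced by $\sum_{a\in A}w_a$, and $w_b$ being a positive integer plays no real role beyond making "weight" meaningful (the argument works for any nonnegative real weights).
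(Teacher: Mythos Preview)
Your randomization step is exactly Alon--Kleitman with weights and is fine; the gap is the reduction to a single cyclic group. There need not exist any homomorphism $q\colon\langle B\rangle\to H$ with $H$ cyclic and $q(b)\neq 0$ for all $b\in B$. Take $G=(\mathbb{Z}/2\mathbb{Z})^2$ and $B=G\setminus\{0\}$: every homomorphism from $G$ to a cyclic group has image of order at most $2$, hence kernel of size at least $2$, so some $b\in B$ is killed. The ``CRT to combine'' suggestion cannot help here since all the individual targets $C_b$ have order $2$, so the product has no cyclic quotient separating all three elements from $0$. (Asking for $q$ injective on $B\cup\{0\}$ is of course even stronger and fails for the same reason.)

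The paper avoids this by reducing only to $(\mathbb{Z}/n\mathbb{Z})^s$ rather than to a cyclic group, and then, instead of fixing a homomorphism to $\mathbb{Z}/n\mathbb{Z}$ and randomizing by multiplication, it randomizes over the homomorphism itself: pick $c\in(\mathbb{Z}/n\mathbb{Z})^s$ uniformly and use the dot-product map $f_b(c)=\sum_i c_ib_i$, setting $A=\{b\in B:f_b(c)\in I\}$. For each fixed nonzero $b$, the map $c\mapsto f_b(c)$ is a surjective homomorphism onto a nonzero cyclic subgroup of $\mathbb{Z}/n\mathbb{Z}$, so Fact~A.1 gives $\mathbb{P}[f_b(c)\in I]\geqslant\alpha$ and your expected-weight computation goes through unchanged. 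In other words, the correct fix is not to find one good homomorphism to a cyclic group (which may not exist) but to average over a family of them; everything else in your write-up is correct.
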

\begin{proof}
    Every finite subset of an abelian group is Freiman isomorphic to a subset of $(\mathbb{Z}/n\mathbb{Z})^s$ for some $n,s$, so we may assume without loss of generality that $G=(\mathbb{Z}/n\mathbb{Z})^s$. Let $I\subseteq\mathbb{Z}/n\mathbb{Z}$ be given for $H=\mathbb{Z}/n\mathbb{Z}$ by Fact A.1.
    
    Now, for each $b=(b_1,\dots,b_s)\in B$, consider the homomorphism $f_b:G\to\mathbb{Z}/n\mathbb{Z}$ defined by $(c_1,\dots,c_s)\mapsto \sum_{i\in[s]}c_ib_i$. Note that, since $(0,\dots,0)\notin B$, $\operatorname{im}(f_b)$ is a non-zero subgroup of $\mathbb{Z}/n\mathbb{Z}$ for every $b\in B$. Now, pick $c\in G$ uniformly at random. Consider the random quantity $W_c=\sum_{b\in B}w_b1_{f_b(c)\in I}$. Since $f_b$ is a group homomorphism, its fibers have constant size, and so $\mathbb{P}[f_b(c)\in I]=|\operatorname{im}(f_b)\cap I|/|\operatorname{im}(f_b)|$ for each $b\in B$. Since also $\operatorname{im}(f_b)$ is non-zero, by Fact A.1 $\mathbb{P}[f_b(c)\in I]\geqslant\alpha$ for each $b\in B$, and so in particular $\mathbb{E}[W_c]\geqslant\alpha\sum_{b\in B}w_b$. 
    
    Thus we may find $c_0\in G$ with $W_{c_0}\geqslant\alpha\sum_{b\in B}w_b$. Now, let $A=\{a\in B:f_a(c_0)\in I\}$. Then $\sum_{a\in A}w_a=W_{c_0}\geqslant\alpha\sum_{b\in B}w_b$, and I claim that $A$ is sum-free. Indeed, suppose there are $a,a'\in A$ with $a+a'\in A$. Note that $f_a(c_0)+f_{a'}(c_0)=f_{a+a'}(c_0)$; by definition of $A$ all three of these quantities lie in $I$, contradicting that $I$ is sum-free.
\end{proof}

\begin{corollary}
    Let $(G,\cdot,1)$ be a solvable group admitting a length-$(n+1)$ subnormal series with abelian factor groups. Then every finite subset $C\subseteq G\setminus\{1\}$ contains a product-free subset of size at least $\alpha|C|/2^n$. 
\end{corollary}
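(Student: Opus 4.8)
\textit{Proof proposal.} The plan is to induct on $n$. The base case is $n=0$: here the hypothesis forces $G$ itself to be abelian (the series is just $1\triangleleft G$, with the single factor $G$), so applying Lemma A.2 to $C\subseteq G\setminus\{1\}$ with all weights equal to $1$ immediately produces a product-free (ie sum-free) subset of $C$ of size at least $\alpha|C|=\alpha|C|/2^{0}$.

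For the inductive step, let $N$ denote the second-to-last term of the given series, so that $Q:=G/N$ is abelian and $N$ carries a subnormal series with abelian factors that is one shorter; write $\pi:G\to Q$ for the quotient map, with $Q$ written additively. Given $C\subseteq G\setminus\{1\}$, partition it along the fibres of $\pi$: set $C_N=C\cap N$, and for each $b\in B:=\pi(C)\setminus\{0\}$ set $C_b=\{c\in C:\pi(c)=b\}$, so that $C$ is the disjoint union of $C_N$ with the sets $C_b$ and in particular $\sum_{b\in B}|C_b|=|C|-|C_N|$. The key step is to handle the ``generic part'' $C\setminus N$ in the abelian quotient via the weighted Alon--Kleitman lemma: apply Lemma A.2 to the finite set $B\subseteq Q\setminus\{0\}$ with the positive integer weights $w_b=|C_b|$, obtaining a sum-free $A\subseteq B$ with $\sum_{a\in A}|C_a|\geqslant\alpha\sum_{b\in B}|C_b|=\alpha(|C|-|C_N|)$. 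Let $C'=\{c\in C:\pi(c)\in A\}$. Then $C'$ is product-free, since if $c_1,c_2\in C'$ had $c_1c_2\in C'$ then $\pi(c_1),\pi(c_2)$ and $\pi(c_1)+\pi(c_2)=\pi(c_1c_2)$ would all lie in $A$, contradicting sum-freeness of $A$; and $|C'|=\sum_{a\in A}|C_a|\geqslant\alpha(|C|-|C_N|)$.

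It remains to treat the part of $C$ inside $N$. Since $1\notin C$ we have $C_N\subseteq N\setminus\{1\}$, so the inductive hypothesis, applied to $N$, yields a product-free subset $C''\subseteq C_N\subseteq C$ with $|C''|\geqslant\alpha|C_N|/2^{\,n-1}$. Now take whichever of $C',C''$ is larger: if $|C_N|\geqslant|C|/2$ then $|C''|\geqslant\alpha|C|/2^{n}$, whereas if $|C_N|<|C|/2$ then $|C'|\geqslant\alpha(|C|-|C_N|)>\alpha|C|/2\geqslant\alpha|C|/2^{n}$. Either way $C$ contains a product-free subset of the required size, completing the induction.

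I do not anticipate a genuine obstacle: once one observes that sum-free subsets of the abelian quotient pull back to product-free subsets of $G$, the weighted form of Alon--Kleitman is precisely what is needed to accommodate the varying fibre sizes, and the dyadic loss accrues a single factor of $2$ at each of the $n$ inductive steps. The only points that require a little care are the trivial edge cases (the empty set, and the degenerate series) and the explicit use of $1\notin C$ to ensure $C_N$ avoids the identity so that the inductive hypothesis applies to it.
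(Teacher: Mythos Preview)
Your proof is correct and follows essentially the same approach as the paper's: induct on $n$, split $C$ according to whether elements land in the penultimate term $N$ of the series, handle $C\cap N$ by the inductive hypothesis, and handle $C\setminus N$ by applying Lemma A.2 in the abelian quotient with weights given by fibre sizes. The only cosmetic difference is that the paper does the case split on $|C\cap H|\gtrless |C|/2$ first and then invokes only the relevant tool, whereas you build both candidates $C'$ and $C''$ and then compare; the content is the same.
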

\begin{proof}
    By induction on $n$. When $n=0$, $G$ is abelian, and so the base case is handled. For the inductive step, suppose $n>0$. Then there is a normal subgroup $H\triangleleft G$ such that $G/H$ is abelian and $H$ admits a length-$n$ subnormal series with abelian factor groups. Let $C\subseteq G$ be any finite subset. If $|C\cap H|\geqslant|C|/2$, then we are done by the inductive hypothesis. Thus suppose $|C\setminus H|\geqslant |C|/2$. Let $\pi:G\to G/H$ denote the projection map, and let $B=\pi(C\setminus H)$. For each $b\in B$, let $w_b=|\pi^{-1}(b)\cap C|$; by Lemma A.2, there is a product-free subset $A\subseteq B$ with $\sum_{a\in A}w_a\geqslant\alpha\sum_{b\in B}w_b$. But $\sum_{a\in A}w_a=|\pi^{-1}(A)\cap C|$, and $\sum_{b\in B}w_b=|C\setminus H|\geqslant |C|/2$. So $|\pi^{-1}(A)\cap C|\geqslant\alpha|C|/2$. The preimage of a product-free set under a group homomorphism is product-free, and so taking $\pi^{-1}(A)\cap C$ gives the desired product-free set.
\end{proof}

\newpage


\begin{thebibliography}{9}
\bibitem{alon_kleitman}Noga Alon and Daniel Kleitman. Sum-free subsets, in: \textit{A Tribute to Paul Erd\H{o}s}, Cambridge University Press, 1990.

\bibitem{anderson}Aaron Anderson. Combinatorial bounds in distal structures. Preprint. 2021.

\bibitem{aschenbrenner_chernikov_gehret_ziegler}Matthias Aschenbrenner, Artem Chernikov, Allen Gehret, Martin Ziegler. Distality in valued fields and related structures. \textit{Transactions of the AMS}, 2022.

\bibitem{babai_sos_1985}László Babai, Vera Sós. Sidon sets in groups and induced subgraphs of Cayley graphs. \textit{European Journal of Combinatorics}, 1985.

\bibitem{breuillard_2013}Emmanuel Breuillard. A brief introduction to approximate groups, in: \textit{Thin Groups and Superstrong Approximation}, Cambridge University Press, 2013.

\bibitem{breuillard_green_tao}Emmanuel Breuillard, Ben Green, Terence Tao. Approximate subgroups of linear groups. \textit{Geometric and Functional Analysis}, Springer, 2011.

\bibitem{chernikov_starchenko_2018}Artem Chernikov and Sergei Starchenko. Regularity lemma for distal structures. \textit{Journal of the European Mathematical Society}, 2018.

\bibitem{chernikov_galvin_starchnkoe}Artem Chernikov, David Galvin, Sergei Starchenko. Cutting lemma and Zarankiewicz's problem in distal structures. \textit{Selecta Mathematica}, 2020.

\bibitem{conant_pillay_terry}Gabe Conant, Anand Pillay, Caroline Terry. Structure and regularity for subsets of groups with finite VC-dimension. \textit{Journal of the EMS}, 2022.

\bibitem{gowers_2008}Timothy Gowers. Quasirandom groups. \textit{Combinatorics, Probability, and Computing}, 2008.

\bibitem{hrushovski}Ehud Hrushovski. Stable group theory and approximate subgroups. \textit{Journal of the American Mathematical Society}, 2011.


\bibitem{kedlaya_2009}Kiran Kedlaya. Product-free subsets of groups, then and now. \textit{Contemporary Mathematics}, 2009. 

\bibitem{krupinski_portillo}Krzysztof Krupiński and Adrián Portillo. On stable quotients. \textit{Notre Dame Journal of Formal Logic}, 2022.

\bibitem{nikolov_pyber_2011}Nikolay Nikolov, László Pyber. Product decompositions of quasirandom groups and a Jordan type theorem. \textit{Journal of the EMS}, 2011.

\bibitem{palacin}Daniel Palacín. On compactifications and product-free sets. \textit{Journal of the LMS}, 2019.

\bibitem{petridis_2012}Giorgis Petridis. New proofs of Plünnecke-type estimates for product sets in groups. \textit{Combinatorica}, 2012.

\bibitem{simon_2013}Pierre Simon. Distal and non-distal NIP theories. \textit{Annals of Pure and Applied Logic}, 2013.


\bibitem{simon_2016}Pierre Simon. A note on ``Regularity lemma for distal structures''. \textit{Proceedings of the American Mathematical Society}, 2016.

\bibitem{tao_2008}Terence Tao. Product set estimates for non-commutative groups. \textit{Combinatorica}, 2008.

\bibitem{tao_vu_2017}Terence Tao, Van Vu. Sum-free sets in groups: a survey. \textit{Journal of Combinatorics}, 2017. 



\end{thebibliography}

\end{document}